\documentclass[]{amsart}

%PACKAGES
\usepackage[T1]{fontenc}	
\usepackage{totpages}
 % fix bug in totpages
\usepackage{graphicx}
\usepackage{amsmath, amsthm, amssymb}
\usepackage{upgreek}
\usepackage{mathrsfs}
\usepackage{pdfsync}
\usepackage[usenames,dvipsnames]{xcolor}
\usepackage[inline,shortlabels]{enumitem}
\usepackage[hyphens]{url}									%per gli url in bliografia
\usepackage{verbatim}								%per commentare cose lunghe
\usepackage{dsfont}									%doublestroke \mathds
\usepackage{bbm}

\usepackage{mathtools}
\usepackage{xparse}
\usepackage{microtype}

\usepackage[font=small]{caption}

%HYPERREF E LINKING
\usepackage[pdftex,bookmarks,
						hidelinks,					%nasconde i link: antagonista di colorlinks (s 1 E no 2)
						%colorlinks,				%link a colori invece che riquadrati
						breaklinks,%unicode,
						citecolor=OliveGreen,
						linkcolor=Maroon
						]{hyperref} %hyperref deve stare per ultimo

\usepackage{mathtools}

\usepackage[smalltableaux,aligntableaux=center]{ytableau}

\allowdisplaybreaks

\linespread{1.}

%%%%%%%%%%%%%%%%%%%%OPERATORS AND COMMANDS%%%%%%%%%%%%%

\usepackage{xparse}

\ExplSyntaxOn
%abbreviations
\NewDocumentCommand{\makeabbrev}{mmm}
 {
  \yoruk_makeabbrev:nnn { #1 } { #2 } { #3 }
 }

\cs_new_protected:Npn \yoruk_makeabbrev:nnn #1 #2 #3
 {
  \clist_map_inline:nn { #3 }
   {
    \cs_new_protected:cpn { #2 } { #1 { ##1 } }
   }
 }
 \ExplSyntaxOff

\makeabbrev{\textbf}{tbf#1}{a,b,c,d,e,f,g,h,i,j,k,l,m,n,o,p,q,r,s,t,u,v,w,x,y,z,A,B,C,D,E,F,G,H,I,J,K,L,M,N,O,P,Q,R,S,T,U,V,W,X,Y,Z}

\makeabbrev{\textbf}{bf#1}{a,b,c,d,e,f,g,h,i,j,k,l,m,n,o,p,q,r,s,t,u,v,w,x,y,z,A,B,C,D,E,F,G,H,I,J,K,L,M,N,O,P,Q,R,S,T,U,V,W,X,Y,Z}

\makeabbrev{\textsf}{tsf#1}{a,b,c,d,e,f,g,h,i,j,k,l,m,n,o,p,q,r,s,t,u,v,w,x,y,z,A,B,C,D,E,F,G,H,I,J,K,L,M,N,O,P,Q,R,S,T,U,V,W,X,Y,Z}

\makeabbrev{\mathsf}{mss#1}{a,b,c,d,e,f,g,h,i,j,k,l,m,n,o,p,q,r,s,t,u,v,w,x,y,z,A,B,C,D,E,F,G,H,I,J,K,L,M,N,O,P,Q,R,S,T,U,V,W,X,Y,Z}

\makeabbrev{\mathfrak}{mf#1}{a,b,c,d,e,f,g,h,i,j,k,l,m,n,o,p,q,r,s,t,u,v,w,x,y,z,A,B,C,D,E,F,G,H,I,J,K,L,M,N,O,P,Q,R,S,T,U,V,W,X,Y,Z,
sl,gl
}

\makeabbrev{\mathrm}{mrm#1}{a,b,c,d,e,f,g,h,i,j,k,l,m,n,o,p,q,r,s,t,u,v,w,x,y,z,A,B,C,D,E,F,G,H,I,J,K,L,M,N,O,P,Q,R,S,T,U,V,W,X,Y,Z}

\makeabbrev{\mathbf}{mbf#1}{a,b,c,d,e,f,g,h,i,j,k,l,m,n,o,p,q,r,s,t,u,v,w,x,y,z,A,B,C,D,E,F,G,H,I,J,K,L,M,N,O,P,Q,R,S,T,U,V,W,X,Y,Z}

\makeabbrev{\mathcal}{mc#1}{A,B,C,D,E,F,G,H,I,J,K,L,M,N,O,P,Q,R,S,T,U,V,W,X,Y,Z}

\makeabbrev{\mathbb}{mbb#1}{A,B,C,D,E,F,G,H,I,J,K,L,M,N,O,P,Q,R,S,T,U,V,W,X,Y,Z}

\makeabbrev{\mathscr}{ms#1}{A,B,C,D,E,F,G,H,I,J,K,L,M,N,O,P,Q,R,S,T,U,V,W,X,Y,Z}

\makeabbrev{\mathrm}{#1}{
%Algebraic Ops
Id,id,ran,rk,diag,stab,ann,conv,pr,ev,tr,End,Hom,sgn,im,op,can,fin,ext,red,tot,Leb,lex,Aut,Inn,
%
%Analytic Ops
rot,usc,lsc,Lip,lip,bSymLip,osc,AC,loc,coz,z,
%
%Measure Theory
supp,Opt,Adm,Cpl,Geo,GeoOpt,GeoAdm,GeoCpl,reg,res,graph,
%
%Topology/Geometry
bd,co,Ric,Exp,dExp,dist,seg,Seg,cut,fcut,Cut,SDiff,Iso,Isom,diam,cl,Homeo,Diff,Der,vol,dvol,inj,relint, Graph, sub,
%
%Probability
var,law,Var,Poi,Gam,pa,so,iso,fs,inv,pqi,mix,erg,
TestF,
%Miscellaneous
ob,dom,cod,inp,
}

\makeabbrev{\mathsf}{#1}{CD,BE,MCP,Ent,wMTW,MTW,Ch,RCD,EVI,Rad,dRad,SL,cSL,dSL,ScL,Irr,SC,wFe,VA,MetMeas,UMeas,CSMet,Met,USp,Meas,Mbl,alg,Alg}

\makeabbrev{\mathsc}{#1}{mmaf,cg}

 %TOPOLOGY
\newcommand{\A}{\Sigma} %SIGMA-ALGEBRA
 %BOREL SIGMA-ALGEBRA

\newcommand{\mathsc}[1]{\text{\textsc{#1}}}

\DeclareMathOperator{\eqdef}{\coloneqq}

\let\epsilon\varepsilon

								%omesso \rightarrow\infty

						%Differenziale esatto

						%Modulo
					%Norma
					%Norma
					%Norma
							%Insieme, graffe
							%Insieme, graffe
									%Insieme, graffe
							%Tonde
\newcommand{\tparen}[1]{\big({#1}\big)}

							%Quadre

\newcommand{\class}[2][]{\left[#2\right]_{#1}}						%Measure classes
						%Measure classes
						%Measure classes

								%Rappresentante
							%Rappresentante
							%Quadre
			%Prodotto scalare
		%Prodotto scalare	
								%Successione

									%funzioni continue e limitate 
									%funzioni continue e limitate 
									%funzioni continue e evanescenti

\newcommand{\pfwd}{\sharp}

\DeclareMathOperator{\car}{\mathbf 1}

\DeclareMathOperator{\emp}{\varnothing}

 	%renew perch \C  un comando nell'estensione "unicode" del pacchetto hyperref: togliendo l'estensione (o il pacchetto) si pu usare newcommand

\newcommand{\comma}{\,\mathrm{,}\;\,}

\newcommand{\fstop}{\,\mathrm{.}}

\usepackage{scrextend}						%KOMA: confligge quindi deve stare qui

\let\temp\phi
\let\phi\varphi
\let\varphi\temp

\numberwithin{equation}{section}
\theoremstyle{plain}
\newtheorem{theorem}{Theorem}%[section]
\newtheorem*{theorem*}{Theorem}
\newtheorem*{mthm*}{Main Theorem}

\newtheorem{proposition}[theorem]{Proposition}%[section]
%[section]
\newtheorem{corollary}[theorem]{Corollary}%[section]

\theoremstyle{definition}
%[section]
%[section]
\newtheorem*{defs*}{Definition}%[section]
%[section]

\theoremstyle{remark}
\newtheorem{remark}[theorem]{Remark}%[section]
%[section]
%[section]
\newtheorem*{ass*}{Assumption}%[section]

\renewcommand{\paragraph}[1]{\medskip\emph{#1}.\quad}

\begin{document}

\title[A characterization of Maps of Bounded Compression]{A characterization of\\Maps of Bounded Compression}
\thanks{The author gratefully acknowledges funding of his current position by the Austrian Science Fund (FWF), grant ESPRIT208.
He is grateful to Enrico Pasqualetto for pointing out some references on maps of bounded compression.}

\author[L.~Dello Schiavo]{Lorenzo Dello Schiavo}
\address{Institute of Science and Technology Austria
\\
Am Campus 1\\
3400 Klosterneuburg\\
Austria
}
\email{lorenzo.delloschiavo@ist.ac.at}

\begin{abstract}
A measurable map between measure spaces is shown to have bounded compression if and only if its image via the measure-algebra functor is Lipschitz-continuous w.r.t.\ the measure-algebra distances.
This provides a natural interpretation of maps of bounded compression/deformation by means of the measure-algebra functor and corroborates the assertion that maps of bounded deformation are the natural class of morphisms for the category of complete and separable metric measure spaces.
\end{abstract}
\keywords{maps of bounded compression; maps of bounded compression; measure algebras}

\maketitle

%\begin{center}
%\today
%\end{center}
%
%\setcounter{tocdepth}{3}
%\tableofcontents
\section{Introduction}
Let~$\phi\colon (X_1,\A_1,\mu_1)\to(X_2,\A_2,\mu_2)$ be a measurable map between two measure spaces, and denote by~$\phi_\pfwd\mu_1\eqdef \mu_1\circ \phi^{-1}$ the push-forward measure of~$\mu_1$ via~$\phi$.
We say that~$\phi$ is \emph{inverse-nil-preserving} if 
\[
\phi_\pfwd\mu_1\ll\mu_2\comma
\]
and that~$\phi$ has \emph{bounded compression} if there exists a constant~$C=C_\phi\in (0,\infty)$ such that
\[
\phi_\pfwd \mu_1\leq C\mu_2 \quad \text{on} \quad \A_2\fstop
\]
We call the infimal such constant the \emph{compression} of~$\phi$.
Maps of bounded compression ---in this generality firstly considered by N.~Gigli in~\cite{Gig18}--- have found numerous applications in metric-measure-space analysis, where they play a key role in several important definitions.
Notably, they are instrumental to the definition of \emph{minimal weak upper gradient} of a real-valued function on a metric measure space~\cite{AmbGigSav14}, and of \emph{pull-back} of \emph{normed modules}~\cite{Gig18}, also cf.~\cite[Chap.~3]{GigPas20}.

In spite of their importance, it seems however that maps of bounded compression have not been much investigated as a measure-theoretical construct in their own right, i.e.\ when no distance is involved.
Here, we fill this gap by unveiling the meaning, significance, and naturality of the notion of bounded compression in the category of measure algebras.
Our main result may be informally stated as follows: 
\begin{theorem*}
A map has bounded compression if and only if its image via the measure-algebra functor is Lipschitz-continuous with respect to the measure-algebra distances.
\end{theorem*}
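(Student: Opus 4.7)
The plan is to unfold both conditions into explicit measure-theoretic inequalities on~$\phi$ and verify that they coincide, even at the level of constants. The measure-algebra functor sends $(X_i,\A_i,\mu_i)$ to the quotient Boolean $\sigma$-algebra $\A_i/\mu_i$ (measurable sets modulo $\mu_i$-null sets) equipped with the symmetric-difference metric $d_{\mu_i}([A],[B])\eqdef\mu_i(A\triangle B)$; on morphisms, it assigns to~$\phi$ the preimage homomorphism $\phi^*\colon\A_2/\mu_2\to\A_1/\mu_1$, $\phi^*[A]\eqdef[\phi^{-1}(A)]$. I would begin by observing that for $\phi^*$ to be well-defined on quotients it is necessary and sufficient that $\phi$ be inverse-nil-preserving, a property forced by either of the two conditions to be shown equivalent; so one may assume it throughout.

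The key identity is that preimage commutes with symmetric difference,
\[
\phi^{-1}(A)\triangle\phi^{-1}(B)=\phi^{-1}(A\triangle B)\comma
\]
whence, by the very definition of push-forward,
\[
d_{\mu_1}\!\paren{\phi^*[A],\phi^*[B]}
=\mu_1\!\paren{\phi^{-1}(A\triangle B)}
=(\phi_\pfwd\mu_1)(A\triangle B)\comma\qquad A,B\in\A_2\fstop
\]
Consequently $\phi^*$ is $L$-Lipschitz iff $(\phi_\pfwd\mu_1)(A\triangle B)\leq L\,\mu_2(A\triangle B)$ for all $A,B$. Specializing to $B=\emp$ yields the bounded-compression inequality $\phi_\pfwd\mu_1\leq L\,\mu_2$, while conversely the latter implies the former by setting $E\eqdef A\triangle B$. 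Taking infima identifies the optimal Lipschitz constant of $\phi^*$ with the compression of~$\phi$, strengthening the qualitative equivalence to an equality of constants.

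The conceptual content of the argument is thus essentially one line once the measure-algebra formalism is in place; the main obstacle is foundational rather than computational. Because $d_\mu$ is generically $[0,\infty]$-valued, one has to specify precisely which flavor of measure algebra is intended (semifinite versus localizable, with or without a restriction to finite-measure elements, possibly with a Fréchet-type truncation of the pseudometric) and, correspondingly, what "Lipschitz continuity" of $\phi^*$ is to mean in the non-finite case. Accordingly, the bulk of the actual write-up would consist in carefully pinning down these notions, checking their functoriality, and only then invoking the short calculation above to translate "Lipschitz on the measure-algebra object" into "$\phi_\pfwd\mu_1\leq C\,\mu_2$" with matching constants.
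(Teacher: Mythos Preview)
Your proposal is correct and follows essentially the same route as the paper: the core computation is the identity $d_{\mu_1}(\phi^\bullet[A],\phi^\bullet[B])=(\phi_\pfwd\mu_1)(A\triangle B)$, from which Lipschitz continuity with constant $C$ is seen to be equivalent to $\phi_\pfwd\mu_1\leq C\mu_2$ by specializing $B=\emp$ in one direction and by surjectivity of $(A,B)\mapsto A\triangle B$ in the other. The paper resolves the foundational ambiguity you flag by restricting to the ideal $\mfA^\fin$ of finite-measure elements (so that $\rho$ is an honest metric) and separately checks completeness and separability of $(\mfA^\fin,\rho)$ using the isometric embedding $[A]\mapsto[\car_A]$ into $L^1(\mu)$; your anticipated write-up would presumably do the same.
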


The significance of maps of bounded compression in the category of measure algebras is thus a consequence of the naturality (in the non-technical sense) of the measure-algebra functor.

Let us now recall the necessary definitions, following~\cite[Vol.~III]{Fre00}.

\subsection{Some categories}\label{s:Categories}
We say that~$\mssC$ is \emph{a} category of objects~$\msO$, if the objects~$\ob(\mssC)$ of~$\mssC$ coincide with~$\msO$ and the morphisms~$\hom(\mssC)$ of~$\mssC$ are unassigned.
Let
\begin{itemize}
\item $\USp$ be the category of uniform spaces and uniformly continuous maps;
\item $\Met$ be the category of complete and separable metric spaces with all \emph{uniformly continuous maps} as morphisms;
\item $\Met_b$ be the category of complete and separable metric spaces with all \emph{Lipschitz-continuous maps} as morphisms;
\item $\Met_1$ be the category of complete and separable metric spaces with all \emph{short\footnote{We say that a map is \emph{short} if it is Lipschitz-continuous with Lipschitz constant less than~$1$.} maps} as morphisms;
\item $\Meas$ be a category of triples~$\mbbX\eqdef (X,\A,\mu)$ with~$(X,\A)$ a standard Borel space and~$\mu$ a $\sigma$-finite measure on~$(X,\A)$, and morphisms~$\msA\eqdef\hom(\Meas)$. 
We require each~$\phi\in \hom(\mbbX_1,\mbbX_2)$ to be a measurable map~$\phi\colon X_1\to X_2$.
We write~$\msA_{\inp}$ for the subclass of~$\msA$ consisting of inverse-nil-preserving maps.
\end{itemize}

\subsection{Measure algebras}
For~$\mbbX\in\ob(\Meas)$, let~$(\mfA,\bar\mu)$ be the \emph{measure algebra} of~$(\A,\mu)$, that is, the Boolean algebra of equivalence classes of sets in~$\A$ modulo $\mu$-null sets, endowed with the quotient measure functional~$\bar\mu$, e.g.~\cite[321H-I]{Fre00}.
Whenever no confusion may arise, we suppress~$\bar\mu$ from the notation, just writing~$\mfA$ for the measure algebra of~$\mbbX$.
It is always possible to endow~$\mfA$ with a uniformity of pseudo-metrics~$\msU$, turning it into a uniform space on which the standard Boolean-algebra operations are uniformly continuous, e.g.~\cite[323A(b), 323B]{Fre00}. (For uniform spaces and uniformities, see e.g.~\cite[3A4]{Fre00}.)

Consider now a morphism~$\phi\in\hom(\mbbX_1,\mbbX_2)$.
We write~$\phi\in\hom_{\inp}(\mbbX_1,\mbbX_2)$ to indicate that~$\phi$ is additionally inverse-nil-preserving.
In this case, the map~$\phi$ descends to a Boolean homomorphism~$\phi^\bullet\colon\mfA_2\to\mfA_1$, e.g.~\cite[324B]{Fre00}, defined by
\begin{align}\label{eq:Dagger}
\phi^\bullet\colon\class[2]{A}\mapsto \class[1]{\phi^{-1}(A)}\fstop
\end{align}
In the next proposition we summarize the virtually well-known construction of the \emph{measure-algebra functor}~$\Alg$ on~$\Meas$ defined by
\[
\Alg\colon \mbbX\longmapsto (\mfA,\msU) \qquad \text{and}\qquad \Alg\colon \phi\longmapsto \phi^\bullet\fstop
\]
\begin{proposition}\label{p:USp}
The following assertions are equivalent:
\begin{enumerate}[$(i)$]
\item\label{i:p:USp:1} every morphism of~$\Meas$ is inverse-nil-preserving (i.e.,~$\msA=\msA_{\inp}$);
\item\label{i:p:USp:2} $\Alg$ is a (contravariant) functor on~$\Meas$ with values in~$\USp$.
\end{enumerate}
\end{proposition}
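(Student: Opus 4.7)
My plan is to handle the two implications separately.

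The direction $(ii)\Rightarrow(i)$ is essentially definitional. If $\Alg(\phi)\eqdef\phi^\bullet$ is to be a well-defined map on the quotient $\mfA_2$ via~\eqref{eq:Dagger} for every morphism $\phi\in\hom(\mbbX_1,\mbbX_2)$, one needs $\ttclass[1]{\phi^{-1}(A)}=\ttclass[1]{\phi^{-1}(A')}$ whenever $\ttclass[2]{A}=\ttclass[2]{A'}$, which reduces to $\mu_1\ttparen{\phi^{-1}(N)}=0$ for every $\mu_2$-null set $N$, i.e.\ $\phi_\pfwd\mu_1\ll\mu_2$. Hence $\phi\in\msA_{\inp}$, and (i) follows.

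For the converse $(i)\Rightarrow(ii)$, I would first dispense with the purely algebraic functor axioms, which are routine: under (i) each $\phi^\bullet$ is a well-defined Boolean homomorphism by~\cite[324B]{Fre00}, and the identities $\mathrm{id}^\bullet=\mathrm{id}$ together with $(\psi\circ\phi)^\bullet=\phi^\bullet\circ\psi^\bullet$ follow from the set-theoretic identities for preimages. The substantive step is to verify that each $\phi^\bullet\colon(\mfA_2,\msU_2)\to(\mfA_1,\msU_1)$ is uniformly continuous. Recalling that $\msU_i$ is generated by pseudo-metrics $\rho_E(a,b)\eqdef\bar\mu_i\bigl(E\cap(a\triangle b)\bigr)$ for $E$ of finite $\mu_i$-measure, and observing
\[
\rho_{E_1}(\phi^\bullet a,\phi^\bullet b)=\bar\mu_1\bigl(E_1\cap\phi^{-1}(a\triangle b)\bigr)=\nu(a\triangle b),
\]
with $\nu\eqdef\phi_\pfwd(\mu_1\mrestr{E_1})$ a \emph{finite} measure on $X_2$ satisfying $\nu\ll\mu_2$ by (i), my plan is a two-step approximation. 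First, use $\sigma$-finiteness of $\mu_2$ and the Radon--Nikodym density $d\nu/d\mu_2$ to pick, for any $\epsilon>0$, a set $E_2$ of finite $\mu_2$-measure with $\nu(X_2\setminus E_2)<\epsilon/2$. Second, apply the elementary continuity of absolutely continuous \emph{finite} measures on $E_2$ to obtain $\delta>0$ such that $\mu_2(A)<\delta$ and $A\subseteq E_2$ force $\nu(A)<\epsilon/2$. Combining the two bounds yields $\rho_{E_1}(\phi^\bullet a,\phi^\bullet b)<\epsilon$ whenever $\rho_{E_2}(a,b)<\delta$, which is the desired uniform continuity.

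The hard part is precisely this uniform-continuity step: pure absolute continuity does not upgrade to uniform control for arbitrary $\sigma$-finite measures, so one must pair the Radon--Nikodym truncation to a finite-measure set with the standard finite-measure $(\delta,\epsilon)$-estimate. Once this is in place, the remaining verifications---the Boolean-homomorphism properties of $\phi^\bullet$ and the identity/composition laws of $\Alg$---are entirely standard.
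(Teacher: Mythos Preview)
Your proof is correct, and the $(ii)\Rightarrow(i)$ direction matches the paper's. For the uniform-continuity step in $(i)\Rightarrow(ii)$, however, you take a genuinely different route. The paper argues abstractly: since each~$\mbbX$ is $\sigma$-finite, its measure algebra satisfies the countable chain condition (\cite[322B(c), 322G]{Fre00}); under ccc, sequential order-continuity, order-continuity, and uniform continuity of a Boolean homomorphism all coincide (\cite[316F(d), 324F(a)]{Fre00}); and~$\phi^\bullet$ is sequentially order-continuous by~\cite[324B]{Fre00}. Your argument is instead hands-on: you work directly with the generating pseudo-metrics~$\rho_E$, push~$\mu_1\mrestr{E_1}$ forward to a finite measure~$\nu\ll\mu_2$, and combine a Radon--Nikodym truncation to a set of finite $\mu_2$-measure with the classical $(\delta,\epsilon)$ form of absolute continuity for finite measures. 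Your approach is more elementary and self-contained---it avoids the chain of structural results on ccc algebras and makes the analytic content explicit---while the paper's route is shorter to state and situates the result within Fremlin's framework, making clear that uniform continuity of~$\phi^\bullet$ is at bottom an order-theoretic phenomenon.
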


\begin{remark}
The assertion in Proposition~\ref{p:USp} is non-quantitative and may in fact be rephrased without any reference to measures.
Indeed, we might have alternatively stated it for a category with objects~$(X,\A,\mcN)$ with~$(X,\A)$ a standard Borel space, and~$\mcN$ a $\sigma$-ideal of~$\A$ ---playing the role of the $\sigma$-ideal~$\mcN_\mu$ of $\mu$-null sets of $\sigma$-finite measure~$\mu$ on~$(X,\A)$.
This motivated our choice of terminology for \emph{inverse-nil-preserving} maps, since~$\phi\colon \mbbX_1\to\mbbX_2$ is inverse-nil-preserving precisely when~$\phi^{-1}(\mcN_2)\subset \mcN_1$.
\end{remark}

Under the additional datum of a uniform structure on objects of~$\Meas$, Proposition~\ref{p:USp} may be used to characterize uniformly continuous inverse-nil-preserving maps via~$\Alg$ and the forgetful functor to~$\USp$.
Indeed, let~$\UMeas$ be a category of complete and separable uniform spaces~$(X,\msU)$ endowed with $\sigma$-finite Borel measures~$\mu$, and denote by~$\mssf$ the map on~$\UMeas$ defined on objects by~$\mssf\colon (X,\msU,\mu)\to (X,\msU)\in\ob(\USp)$ and preserving morphisms.
Then,

\begin{corollary}
The following assertions are equivalent:
\begin{enumerate}[$(i)$]
\item every morphism of $\UMeas$ is uniformly continuous and inverse-nil-preserving;
\item $\Alg$ and~$\mssf$ are functors on~$\UMeas$ with values in~$\USp$.
\end{enumerate}
\end{corollary}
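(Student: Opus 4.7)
The plan is to reduce the corollary directly to Proposition~\ref{p:USp} together with the very definition of the forgetful map~$\mssf$. First I would make the implicit measurable structure on~$\UMeas$ explicit: each object~$(X,\msU,\mu)$ carries its Borel $\sigma$-algebra, and every morphism of~$\UMeas$ is to be understood as a Borel-measurable set-theoretic map, so that the construction~\eqref{eq:Dagger} makes sense and Proposition~\ref{p:USp} is applicable verbatim after composing with the obvious forgetful correspondence~$\UMeas\to\Meas$ on objects and morphisms.

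For the implication~(i)$\Rightarrow$(ii) I would argue the two functoriality claims separately. Since~$\mssf$ sends morphisms to the very same set-theoretic maps, uniform continuity of every morphism of~$\UMeas$ is precisely the statement that each image under~$\mssf$ lies in~$\hom(\USp)$; identities and composition are preserved trivially. For~$\Alg$, inverse-nil-preservation of every morphism is exactly what Proposition~\ref{p:USp} demands in order to conclude that~$\Alg$ is a contravariant functor with values in~$\USp$.

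Conversely,~(ii)$\Rightarrow$(i) can be read off the definitions. If~$\mssf$ is a functor into~$\USp$ then every morphism of~$\UMeas$ is uniformly continuous, because~$\mssf$ does not modify the underlying map. If~$\Alg$ is a functor on~$\UMeas$ with values in~$\USp$ then in particular it is defined on every morphism of~$\UMeas$ and takes uniformly continuous values; by Proposition~\ref{p:USp} this forces every morphism to be inverse-nil-preserving.

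The main (and only) subtlety I expect is bookkeeping: ensuring that morphisms of~$\UMeas$, which are a priori unassigned, are interpreted as Borel-measurable maps so that the assignment~$\phi\mapsto\phi^\bullet$ in~\eqref{eq:Dagger} is even well-defined on them. Once this convention is stipulated, the corollary is a purely categorical rephrasing of Proposition~\ref{p:USp} in the presence of an extra uniform structure, and no argument is required beyond what is already contained in the proof of that proposition.
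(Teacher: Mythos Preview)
Your proposal is correct and matches the paper's intent: the corollary is stated without proof precisely because it is an immediate consequence of Proposition~\ref{p:USp} together with the definition of~$\mssf$, and your argument spells out exactly this reduction. The bookkeeping subtlety you flag about Borel-measurability of morphisms is a fair point, but the paper leaves this implicit as well and it does not affect the logic of the proof.
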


\subsection{Main result}
Relying on maps of bounded compression, we now turn to a quantitative version of Proposition~\ref{p:USp}.
Let~$\mfA$ be the measure algebra of~$\mbbX\in\ob(\Meas)$.
Write~$\mfA^\fin$ for the ideal of~$\mfA$ consisting of elements with finite $\bar\mu$-measure, and note that the quantity
\[
\rho(a,b)\eqdef \bar\mu(a\triangle b)\comma \qquad a,b\in\mfA\comma
\]
defines a distance~$\rho$ on~$\mfA^\fin$, e.g.~\cite[323A(e)]{Fre00}.

In order to state our main result, we define a map~$\alg$ on~$\Meas$ by
\[
\alg\colon \mbbX\longmapsto(\mfA^\fin,\rho) \qquad \text{and} \qquad \alg\colon \phi\longmapsto\phi^\bullet\fstop
\]

\begin{theorem}\label{t:Main}
The following assertions are equivalent:
\begin{enumerate}[$(i)$]
\item\label{i:t:Main:1} every morphism of $\Meas$ has bounded compression;
\item\label{i:t:Main:2} $\alg$ is a functor on~$\Meas$ with values in~$\Met_b$.%, i.e.~$\alg(\phi)$ and~$\mssf(\phi)$ are Lipschitz maps for all~$\phi\in\msA$.
\end{enumerate}
\end{theorem}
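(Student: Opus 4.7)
The plan is to reduce Theorem~\ref{t:Main} to a per-morphism equivalence: for each~$\phi \in \hom(\mbbX_1,\mbbX_2)$, the map~$\phi$ has compression bounded by~$C$ if and only if~$\phi^\bullet$ descends to a well-defined map~$\mfA_2^\fin \to \mfA_1^\fin$ that is $C$-Lipschitz from~$(\mfA_2^\fin,\rho_2)$ to~$(\mfA_1^\fin,\rho_1)$. Bounded compression trivially implies the inverse-nil-preservation needed to define~$\phi^\bullet$ via~\eqref{eq:Dagger}; conversely, $C$-Lipschitz-continuity together with~$\phi^\bullet\zero=\zero$ forces preimages of $\mu_2$-null sets to be $\mu_1$-null, so~$\phi^\bullet$ is again well-defined by~\eqref{eq:Dagger}.

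The core of the argument is the chain of equalities
\[
\rho_1(\phi^\bullet a, \phi^\bullet b) = \mu_1\bigl(\phi^{-1}(A) \triangle \phi^{-1}(B)\bigr) = \mu_1\bigl(\phi^{-1}(A \triangle B)\bigr) = \phi_\pfwd \mu_1(A \triangle B)
\]
for representatives~$a=[A]_2$, $b=[B]_2$ in~$\mfA_2^\fin$. If~$\phi_\pfwd \mu_1 \leq C\mu_2$, the right-hand side is bounded by~$C\mu_2(A \triangle B) = C\rho_2(a,b)$, yielding Lipschitz-continuity; moreover, taking~$b=\zero$ gives~$\bar\mu_1(\phi^\bullet a) \leq C\bar\mu_2(a)$, so~$\phi^\bullet$ carries~$\mfA_2^\fin$ into~$\mfA_1^\fin$. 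Conversely, $C$-Lipschitz-continuity applied to~$(a,\zero)$ yields~$\phi_\pfwd\mu_1(A) \leq C\mu_2(A)$ whenever~$\mu_2(A)<\infty$, and $\sigma$-finiteness of~$\mu_2$ then allows monotone exhaustion to extend this inequality to all of~$\A_2$.

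To assemble the functorial statement, I would then check that~$\alg(\mbbX) = (\mfA^\fin,\rho)$ lies in~$\ob(\Met_b)$---completeness of~$\rho$ is classical and separability follows from~$(X,\A)$ being standard Borel, cf.~\cite[Vol.~III]{Fre00}---and that~$\alg$ respects composition and identities, which reduces to the set-theoretic identities~$(\psi\circ\phi)^{-1} = \phi^{-1}\circ\psi^{-1}$ and~$\id^{-1}=\id$. The implication~\iref{i:t:Main:1}$\Rightarrow$\iref{i:t:Main:2} then follows from the forward direction of the per-morphism equivalence, applied uniformly across~$\hom(\Meas)$, and~\iref{i:t:Main:2}$\Rightarrow$\iref{i:t:Main:1} from its converse applied to each~$\phi$ individually. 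The main nuisance I anticipate is the bookkeeping that a Lipschitz bound on finite-measure classes upgrades to a genuinely global compression bound; once $\sigma$-finiteness is invoked this is routine, but it is the one point where the hypotheses on objects of~$\Meas$ are genuinely used.
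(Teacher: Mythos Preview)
Your proposal is correct and follows essentially the same route as the paper: both hinge on the identity $\rho_1(\phi^\bullet a,\phi^\bullet b)=\phi_\pfwd\mu_1(A\triangle B)$, deduce Lipschitz-continuity of~$\phi^\bullet$ from the compression bound, and recover the compression bound by setting $b=\zero$. The monotone-exhaustion step you flag as the main nuisance is in fact unnecessary, since for $\mu_2(A)=\infty$ the inequality $\phi_\pfwd\mu_1(A)\leq C\mu_2(A)$ holds trivially; the paper accordingly omits it.
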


\subsection{A natural choice of morphisms for metric measure spaces}
After the work of J.R.~Isbell~\cite{Isb64}, \emph{the} category of metric spaces is usually defined to have all short maps as morphisms (giving rise to~$\Met_1$ in~\S\ref{s:Categories}).
This is essentially the same as choosing as morphisms the class of all Lipschitz-continuous maps (giving rise to~$\Met_b$), in that any Lipschitz-continuous map may be turned into a short map by linearly rescaling distances, and such rescaling has nice categorical properties.
Occasionally, the larger class of uniformly continuous maps too is chosen as the class of morphisms of a category of metric spaces (giving rise to~$\Met$), since uniform continuity is a minimal requirement in discussing the preservation of, e.g., completeness.
This ambiguity for the choice of morphisms in a category of metric measure spaces may be resolved by introducing some additional structure.
Below, we show that when each~$(X,\mssd)\in \ob(\Met_b)$ (which is the same as~$\ob(\Met)$ and~$\ob(\Met_1)$) is further endowed with a $\sigma$-finite Borel measure, then there is a natural choice of morphisms for the result category, namely all Lipschitz-continuous maps of bounded compression.

Indeed, let~$\MetMeas$ be a category of triples~$\mbbX\eqdef (X,\mssd,\mu)$ with~$(X,\mssd)$ a complete and separable metric space and~$\mu$ a $\sigma$-finite Borel measure on~$(X,\mssd)$, and morphisms~$\msB\eqdef\hom(\MetMeas)$ with~$\phi\in \hom(\mbbX_1,\mbbX_2)$ Borel measurable and inverse-nil-preserving.
Denote by~$\mcB$ the Borel $\sigma$-algebra of~$\mbbX\in \ob(\MetMeas)$, and note that~$(X,\mcB)$ is a standard Borel space since~$(X,\mssd)$ is complete and separable.
Thus,~$\mssg\colon \mbbX\mapsto (X,\mcB,\mu)$ maps~$\ob(\MetMeas)\to \ob(\Meas)$ and allows us to identify morphisms in~$\msB$ as morphisms between objects of~$\Meas$.
Under this identification, we may therefore compare the morphisms~$\msB$ of~$\MetMeas$ with those~$\msA$ of~$\Meas$.
If~$\msB\subset\msA$, then~$\mssg$ is a (forgetful) functor~$\MetMeas\to\Meas$ and it is further essentially surjective, since every~$(X,\A,\mu)\in\ob(\Meas)$ arises as the standard Borel $\sigma$-finite measure space associated to an object~$(X,\mssd,\mu)$ by definition of standard Borel space and forgetting the assignment of the distance~$\mssd$ on~$X$.
Thus, if~$\msB=\msA$, the functor~$\mssg$ is an equivalence of categories.
In the following, we shall therefore ---with no loss of generality--- deal with a category~$\MetMeas$ with same morphisms~$\msB=\msA$ as $\Meas$.

Denote now by~$\mssf$ the forgetful functor from~$\MetMeas$ to a category of metric spaces, mapping~$\mbbX$ to~$(X,\mssd)$ and preserving morphisms.
Clearly,~$\mssf\colon \ob(\MetMeas)\to\ob(\Met_b)$, and~$\mssf\colon \msA\to\hom(\Met_b)$ if and only if~$\msA$ consists of Lipschitz-continuous maps.
After~\cite[Dfn.~2.4.1]{Gig18}, we say that a map~$\phi\colon \mbbX_1\to \mbbX_2$ has \emph{bounded deformation} if it is both Lipschitz and of bounded compression.
Again in light of the equivalence of~$\MetMeas$ and~$\Meas$ we may as well regard~$\alg$ as a map on~$\MetMeas$.
Thus, we also have:
\begin{corollary}\label{c:Main}
The following assertions are equivalent:
\begin{enumerate}[$(i)$]
\item every morphism of $\MetMeas$ has bounded deformation;
\item $\alg$ is a functor on~$\MetMeas$ and both $\mssf$ and~$\alg$ take values in~$\Met_b$.
\end{enumerate}
\end{corollary}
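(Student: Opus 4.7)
The plan is to reduce Corollary~\ref{c:Main} to Theorem~\ref{t:Main} by combining it with the functoriality analysis of $\mssf$ already carried out in the paragraph preceding the statement, so that no new analytic content is required.

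First, I would invoke the categorical equivalence of $\MetMeas$ and $\Meas$ established in the preamble: under the working assumption $\msB=\msA$, the forgetful functor $\mssg$ identifies the morphisms of $\MetMeas$ with those of $\Meas$, and under this identification $\alg$ is well-defined on $\MetMeas$ with exactly the same behaviour as on $\Meas$. Theorem~\ref{t:Main} then transfers verbatim and yields: every morphism of $\MetMeas$ has bounded compression if and only if $\alg$ is a functor on $\MetMeas$ with values in $\Met_b$ (i.e.\ conditions~(1) and~(3) in the parsing of~(ii) below).

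Second, I would recall the observation from the paragraph preceding the statement: $\mssf$ always sends $\ob(\MetMeas)$ into $\ob(\Met_b)$, while it maps $\msA$ into $\hom(\Met_b)$ precisely when every morphism of $\MetMeas$ is Lipschitz-continuous. Hence $\mssf$ takes values in $\Met_b$ if and only if every morphism of $\MetMeas$ is Lipschitz. Parsing~(ii) of the corollary as the conjunction of the three clauses (1)~$\alg$ is a functor on $\MetMeas$, (2)~$\mssf$ takes values in $\Met_b$, and (3)~$\alg$ takes values in $\Met_b$, I see that (1)+(3) are controlled by the first step and (2) by the second. Since bounded deformation is by definition the conjunction of Lipschitz continuity and bounded compression, taking the logical conjunction of the two equivalences delivers (i)~$\iff$~(ii).

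I do not anticipate any real obstacle: all the analytic work sits inside Theorem~\ref{t:Main}, and the corollary is essentially a bookkeeping statement packaging the compression half (tracked by $\alg$) with the Lipschitz half (tracked by $\mssf$). The only ancillary points worth a one-line verification are that composition of Lipschitz maps is Lipschitz and that composition of maps of bounded compression is of bounded compression, both of which are routine and implicit in the very assertion that $\Met_b$ and the respective categories exist as categories.
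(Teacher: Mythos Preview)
Your proposal is correct and matches the paper's approach exactly: the paper does not even supply a separate proof of Corollary~\ref{c:Main}, since it is meant to follow immediately from Theorem~\ref{t:Main} together with the observation (stated just before the corollary) that~$\mssf$ takes values in~$\Met_b$ if and only if every morphism is Lipschitz, combined with the definition of bounded deformation as Lipschitz plus bounded compression. Your decomposition of~(ii) into the three clauses and the reduction via~$\mssg$ is precisely the intended bookkeeping.
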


We note that the requirement of~$\phi\in\msA$ having bounded compression \emph{competes} with that of~$\phi$ being Lipschitz.
For instance, a constant map~$x$ is `as much Lipschitz as possible' (since its Lipschitz constant is zero), but its compression is `maximally unbounded' (since~$x_\pfwd \mu=(\mu X)\delta_x$ is a multiple of a Dirac mass).
More precisely ---as we will show in the proof of Theorem~\ref{t:Main}--- a map~$\phi\colon (X_1,\mu_1)\to (X_2,\mu_2)$ has  compression~$C$ if and only if~$\phi^\bullet\colon (\mfA_2,\bar\mu_2)\to (\mfA_1,\bar\mu_1)$ is $C$-Lipschitz.
The above competition is thus a consequence of the fact that~$\mssf$ is covariant, while~$\alg$ is contravariant.

Informally, Corollary~\ref{c:Main} resolves the competition between being Lipschitz-continuous and having bounded compression by showing that, when a category~$\MetMeas$ can be understood as a subcategory of~$\Met_b$ via the $\alg$ functor, then all maps have bounded compression, and thus that maps of bounded deformation are a natural class of morphisms for a category with objects~$\ob(\MetMeas)$.

\section{Proofs}
\begin{proof}[Proof of Proposition~\ref{p:USp}]
\ref{i:p:USp:2}$\implies$\ref{i:p:USp:1}
It suffices to note that, by~\cite[324B]{Fre00}, the Boolean homomorphism~$\Alg(\phi)\eqdef \phi^\bullet$ is well-defined (if and) only if~$\phi$ is inverse-nil-preserving.

\ref{i:p:USp:1}$\implies$\ref{i:p:USp:2}
As discussed above,~$\Alg\colon \ob(\Meas)\to\ob(\USp)$.
Thus, since~$\msA=\msA_{\inp}$ by assumption, then~$\Alg$ is a functor on~$\Meas$ by~\cite[324C(c), 324D]{Fre00}.
It remains to show that~$\Alg\colon \msA_{\inp}\to\hom(\USp)$, i.e.\ that~$\phi^\bullet\colon \msA_2 \to \msA_1$ is $\msU_2/\msU_1$-uniformly continuous for every~$\phi\in\hom_{\inp}(\mbbX_1,\mbbX_2)$.
To this end, we argue as follows.
Since every~$\mbbX\in\ob(\Meas)$ is a $\sigma$-finite standard Borel spaces, its measure algebra satisfies the countable chain condition~\cite[316A]{Fre00} by combining~\cite[322B(c) and 322G]{Fre00}.
In light of the countable chain condition, the sequential order-continuity of a Boolean homomorphism on~$\mfA$ coincides with its order-continuity by~\cite[316F(d)]{Fre00}, and in turn with its uniform continuity by~\cite[324F(a)]{Fre00}.
Therefore, it suffices to show that~$\phi^\bullet$ is sequentially order-continuous, which is shown in~\cite[324B]{Fre00}.
\end{proof}

\begin{proof}[Proof of Theorem~\ref{t:Main}]
We show that~$\alg\colon \ob(\Meas)\to\ob(\Met_b)$ and~$\alg\colon \msA\to\hom(\Met_b)$ if and only if every~$\phi\in\msA$ has bounded compression.
 
\ref{i:t:Main:1}$\implies$\ref{i:t:Main:2}.
For every~$\mbbX\in\ob(\Meas)$ the algebra $(\mfA^\fin,\rho)$ is a complete metric space by~\cite[323X(g)]{Fre00}.
Note that, since~$\mu$ is a $\sigma$-finite measure on a standard Borel space,~$L^1(\mu)$ is separable, e.g.~\cite[365X(p)]{Fre00}.
Again by~\cite[323X(g)]{Fre00}, the map~$\chi\colon\mfA^\fin\to L^0(\mu)$ defined by~$\chi(\class{A})=\class[\mu]{\car_A}$ is an isometry of~$(\mfA^\fin,\rho)$ into~$L^1(\mu)$.
Thus,~$(\mfA^\fin,\rho)$ is separable, being (isometric to) a subset of the separable \emph{metric} space~$L^1(\mu)$.
As a consequence,~$\alg(\mbbX)=(\mfA^\fin,\rho)\in\ob(\Met_b)$ for every~$\mbbX\in\ob(\Meas)$.

Now, let~$\phi\colon \mbbX_1\to\mbbX_2$ have compression~$C$.
Then, for all~$A\in\A_2$,
\[
\bar\mu_1\phi^\bullet \class[2]{A}= (\mu\circ\phi^{-1})A=\phi_\pfwd\mu_1 A \leq C \mu_2 A= C \bar\mu_2\class[2]{A}\comma
\]
which shows that~$\phi^\bullet (\mfA_2^\fin)\subset \mfA_1^\fin$, i.e.\ that~$\alg(\phi)\colon \mfA_2^\fin\to \mfA_1^\fin$ is a map between the right objects.
Furthermore, for all~$A,B\in\A_2$,
\begin{align*}
\rho_1\tparen{\phi^\bullet\class[2]{A},\phi^\bullet\class[2]{B}}=&\ \bar\mu_1\tparen{\phi^\bullet\class[2]{A}\triangle \phi^\bullet\class[2]{B}}= \mu_1 \tparen{\phi^{-1}(A) \triangle \phi^{-1}(B)} = \phi_\pfwd\mu_1 (A\triangle B)
\\
\leq&\ C \mu_2(A\triangle B)= C\bar\mu_2(\class[2]{A}\triangle \class[2]{B})= C\rho_2(\class[2]{A},\class[2]{B}) \fstop
\end{align*}
Thus ---all other necessary verifications being straightforward---,~$\alg$ is indeed a $\Met_b$-valued functor.

\ref{i:t:Main:2}$\implies$\ref{i:t:Main:1}.
Let~$\phi\in\msA$.
Since~$\alg$ is a functor on~$\MetMeas$ with values in~$\Met_b$, then~$\phi^\bullet\in\hom(\mfA_2^\fin,\mfA_1^\fin)$ is a $\rho_2/\rho_1$ Lipschitz map.
Let~$C$ be the Lipschitz constant of~$\phi^\bullet$. 
Then, for all~$A,B\in\A_2$,
\begin{align*}
\phi_\pfwd\mu_1 (A\triangle B)=&\ \mu_1 \tparen{\phi^{-1}(A) \triangle \phi^{-1}(B)} =\bar\mu_1\tparen{\phi^\bullet\class[2]{A}\triangle \phi^\bullet\class[2]{B}}
= \rho_1\tparen{\phi^\bullet\class[2]{A},\phi^\bullet\class[2]{B}}
\\
\leq&\ C\rho_2(\class[2]{A},\class[2]{B})=C\bar\mu_2(\class[2]{A}\triangle \class[2]{B})=C \mu_2(A\triangle B)\comma
\end{align*}
and choosing~$B=\emp$ (i.e.~$A\triangle B=A$) shows that~$\phi$ has compression~$C$.
\end{proof}

%{\small
%\bibliographystyle{abbrvurl}
%\bibliography{/Users/lorenzodelloschiavo/Documents/MasterBib.bib}

\begin{thebibliography}{1}

\bibitem{AmbGigSav14}
L.~Ambrosio, N.~Gigli, and G.~Savar{\'{e}}.
\newblock {Calculus and heat flow in metric measure spaces and applications to
  spaces with Ricci bounds from below}.
\newblock {\em {Invent.\ Math.}}, 395:289--391, 2014.
\newblock \href {https://doi.org/10.1007/s00222-013-0456-1}
  {\path{doi:10.1007/s00222-013-0456-1}}.

\bibitem{Fre00}
D.~H. Fremlin.
\newblock {\em {Measure Theory -- Volume I - IV, V Part I \& II}}.
\newblock {Torres Fremlin (ed.)}, 2000-2008.

\bibitem{Gig18}
N.~Gigli.
\newblock {Nonsmooth Differential Geometry -- An Approach Tailored for Spaces
  with Ricci Curvature Bounded from Below}.
\newblock {\em {Mem.\ Am.\ Math.\ Soc.}}, 251(1196), 2018.
\newblock \href {https://doi.org/10.1090/memo/1196}
  {\path{doi:10.1090/memo/1196}}.

\bibitem{GigPas20}
N.~Gigli and E.~Pasqualetto.
\newblock {\em {Lectures on Nonsmooth Differential Geometry}}.
\newblock Springer International Publishing, 2020.
\newblock \href {https://doi.org/10.1007/978-3-030-38613-9}
  {\path{doi:10.1007/978-3-030-38613-9}}.

\bibitem{Isb64}
J.~R. Isbell.
\newblock {Six theorems about injective metric spaces}.
\newblock {\em {Comment.\ Math.\ Helv.}}, 39:65--76, 1964.

\end{thebibliography}
%}

\end{document}